\theoremstyle{plain}
\newtheorem{theorem}{Theorem}
\newtheorem{proposition}{Proposition}
\newtheorem{definition}{Definition}
\theoremstyle{definition}
\newcommand{\QR}{Q_{Cl}(R)}
\newcommand{\UQR}{U(Q_{Cl}(R))}
\newcommand{\BQR}{\mathfrak{B}(Q_{Cl}(R))}
\newcommand{\RR}{\mathfrak{R}(R)}
\newcommand{\UR}{U(R)}
\newcommand{\BR}{\mathfrak{B}(R)}
\newcommand{\JR}{J(R)}
\newcommand{\Z}{\mathbb{Z}}
\newcommand{\de}{\delta}
\renewcommand{\phi}{\varphi}
\newcommand{\sbs}{\subset}
\begin{document}
\begin{center}
{\huge Type conditions of stable range for identification of qualitative generalized classes of rings}
\end{center}
\vskip 0.1cm \centerline{{\Large Bohdan Zabavsky}}

\vskip 0.3cm

\centerline{\footnotesize{Department of Mechanics and Mathematics,  Ivan Franko National University of Lviv,  Ukraine}}
\vskip 0.5cm

\centerline{\footnotesize{March, 2016}}
\vskip 0.7cm

\footnotesize{\noindent\textbf{Abstract:} \textit{
      This article deals mostly with the following question: when is the classical ring of quotients of a commutative ring a ring of stable range 1?
      We introduce the concepts of a ring of (von Neumann) regular range 1, a ring of semihereditary range 1, a ring of regular range 1, a semihereditary local ring, a regular local ring. We find relationships between the introduced classes of rings and known ones, in particular, it is established  that a commutative indecomposable almost clean ring is a regular local ring. A commutative any ring of idempotent regular range 1 is an almost clean ring. It is shown that any commutative indecomposable almost clean Bezout ring is an Hermite ring, any commutative semihereditary ring is a ring of idempotent regular range 1. The classical ring of quotients of a commutative Bezout ring $\QR$ is a (von Neumann) regular local ring if and only if $R$ is a commutative semihereditary local ring.}
}

\vskip 1cm

\normalsize

\section{Introduction}

\subsection{Terminology and notion}

Throughout, all rings are assumed to be associative with unit and $1 \neq 0$. The set of nonzero divisors (also called regular elements) of $R$ is denoted by $\RR$, the set of units by $\UR$  and the set of idempotents by $\BR$. The Jacobson radical of a ring $R$ is denoted by $\JR$. The classical ring of quotients of ring $R$ is denoted by $\QR$.

A ring $R$ is called indecomposable if $\BR = \{0, 1\}$. A ring is called clean if every its element is the sum of a unit and idempotents, and it it called almost clean if each its element is the sum of a regular element and an idempotent~\cite{McG}. An element $a$ of a  ring $R$ is called (von Neumann) regular element, if $axa = a$ for some element $x \in R$. An element $a$ of a ring $R$ is called a left (right) semihereditary element if $Ra$ ($aR$) is projective. A ring $R$ is {\it a ring of stable range 1}, if for any $a, b \in R$ such that $aR+bR=R$ there exists $t\in R$ such that $(a+bt)R = R$. A ring $R$ is {\it a ring of stable range 2}, if for any $a, b, c \in R$ such that $aR + bR + cR = R$ there exist $x, y \in R$ such that $(a+cx)R + (b+cy)R= R$ (see \cite{Zabavsk}).

Following Kaplansky \cite{Kapl} a commutative ring is said to be an elementary divisor ring if every  matrix $A$ over $R$ is equivalent to a diagonal matrix, i.e. for $A$ there exist invertible matrices $P$ and $Q$ of appropriate sizes such that $PAQ$ is diagonal matrix $(d_{ij})$  (i.e $d_{ij} = 0$ whenever $i \neq j$) with the property that $Rd_{i+1,i+1}R\subseteq d_{ii}R\cap Rd_{ii}$. If every 1 by 2 and 2 by 1 matrix over $R$ is equivalent to a diagonal matrix then the ring called an Hermite ring.

Obviously, an elementary divisor ring is Hermite and it is easy to see that an Hermite ring is Bezout \cite{McG}. Examples that neither implication is revertible are provided by Gillmann and Henriksen in \cite{GH}. We have the following result.

\begin{theorem}\cite{Zabavsk}\label{t1.1.1}
      A commutative Bezout ring $R$ is an Hermite ring if and only if the stable range of $R$ is equal 2.
\end{theorem}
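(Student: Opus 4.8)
The plan is to reduce the statement to Kaplansky's cofactor form of the Hermite condition and then to play the stable range $2$ property against the Bezout property. First I would record the following reformulation, valid over any commutative ring: a row $(a,b)$ is equivalent to a diagonal matrix $(d,0)$ if and only if there exist $d,a_1,b_1\in R$ with $a=a_1d$, $b=b_1d$ and $a_1R+b_1R=R$. Indeed, if $(a,b)Q=(d,0)$ with $Q$ invertible, then the first row $(a_1,b_1)$ of $Q^{-1}$ satisfies $a=a_1d$, $b=b_1d$ and is unimodular, since the first row of an invertible $2\times2$ matrix is unimodular (its entries pair with a column of the adjugate to give a unit). Conversely, a relation $a_1s+b_1t=1$ yields an explicit matrix in $\mathrm{SL}_2(R)$ carrying $(a_1,b_1)$ to $(1,0)$, hence $(a,b)=d(a_1,b_1)$ to $(d,0)$. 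So I would restate ``$R$ is Hermite'' as: every pair $a,b$ admits coprime cofactors $a_1,b_1$ over their Bezout generator $d$, where $aR+bR=dR$.

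For the implication from stable range $2$ to Hermite, I would start from $aR+bR=dR$, write $a=a_1d$, $b=b_1d$ and $d=as+bt$. Substituting gives $d=d(a_1s+b_1t)$, so setting $e:=a_1s+b_1t$ one has $(1-e)d=0$. The crucial observation is that $(a_1,b_1,1-e)$ is unimodular, because $a_1s+b_1t+(1-e)=1$. Applying the stable range $2$ hypothesis produces $x,y$ with $(a_1+(1-e)x)R+(b_1+(1-e)y)R=R$; since $(1-e)d=0$, the corrected cofactors $a_1'=a_1+(1-e)x$ and $b_1'=b_1+(1-e)y$ still satisfy $a_1'd=a$ and $b_1'd=b$, and now $a_1'R+b_1'R=R$. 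This is exactly the coprime-cofactor condition, so $R$ is Hermite.

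For the converse, I would take a unimodular triple $aR+bR+cR=R$, use Bezout to set $aR+bR=dR$, and invoke the Hermite property to get coprime cofactors $a_1,b_1$ with $a=a_1d$, $b=b_1d$ and $a_1s+b_1t=1$. From the triple I also get $dR+cR=R$, say $dp+cq=1$. Choosing $x=-t$, $y=s$, the combination of the generators $a+cx,\,b+cy$ of $I:=(a+cx)R+(b+cy)R$ with coefficients $-b_1,\,a_1$ kills the $d$-part ($-ab_1+ba_1=0$) and leaves $c(a_1s+b_1t)=c$, so $c\in I$. Then $(a+cx)s+(b+cy)t=as+bt+c(xs+yt)=d+c(xs+yt)$ lies in $I$, and since $c\in I$ this forces $d\in I$; finally $1=dp+cq\in I$, so $I=R$, which is the stable range $2$ conclusion.

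The two matrix identities above are routine; the real content, and the main obstacle, is the zero-divisor phenomenon in the first implication. In a Bezout ring the naive cofactors of $a,b$ over $d$ need not be coprime — one only obtains $d(a_1s+b_1t)=d$ rather than $a_1s+b_1t=1$ — and it is exactly the slack element $1-e$ annihilating $d$ that must be absorbed. Packaging $1-e$ as the third coordinate of a unimodular triple is what lets the stable range $2$ hypothesis perform this absorption, and the delicate point is verifying that the correction leaves the products $a_1'd$, $b_1'd$ unchanged. I would also flag that ``stable range $2$'' must be read as satisfying the displayed condition (stable range at most $2$), since an elementary divisor ring such as a field is Hermite yet has stable range $1$.
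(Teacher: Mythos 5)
You cannot be compared against the paper's own argument here: Theorem~\ref{t1.1.1} is stated with a citation to \cite{Zabavsk} and the paper gives no proof of it, so the only question is whether your blind proof stands on its own. It does. Your reformulation of the Hermite condition as the existence of coprime cofactors ($a=a_1d$, $b=b_1d$, $a_1R+b_1R=R$) is the correct Kaplansky-style reduction; in the forward direction the key move --- packaging the slack element $1-e$ with $(1-e)d=0$ into the unimodular triple $(a_1,b_1,1-e)$ and letting stable range 2 absorb it without disturbing the products $a_1'd=a$, $b_1'd=b$ --- is exactly the standard (and, as far as the cited monograph goes, the known) argument; and in the converse your explicit choice $x=-t$, $y=s$ checks out, since $(a+cx)(-b_1)+(b+cy)a_1=c(a_1s+b_1t)=c$ and $(a+cx)s+(b+cy)t=d+c(xs+yt)$, giving $c\in I$, then $d\in I$, then $1=dp+cq\in I$. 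Your closing caveat is also the right reading: with the paper's displayed definition, ``stable range of $R$ is equal 2'' must be interpreted as ``$R$ satisfies the stable range 2 condition,'' i.e.\ stable range at most 2, since Hermite rings of stable range 1 (e.g.\ fields) exist.
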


Contessa in \cite{Cont} called a ring a (von Neumann) regular local ring if for each $a \in R$ either $a$ or $1 - a$ is a (von Neumann) regular element.

\subsection{Useful facts known}

\begin{proposition}\label{p1.2.1}
      Let $R$ be a commutative Bezout ring. If $\phi \in \BQR$ then $\phi \in \BR$.
\end{proposition}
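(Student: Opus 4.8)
The plan is to represent the idempotent as a fraction, use the Bezout hypothesis to put it in ``lowest terms,'' and then show the reduced fraction already lies in $R$. First I would write $\phi = as^{-1}$ with $a \in R$ and $s \in \RR$, which is available because $\QR$ is the localization of the commutative ring $R$ at its set $\RR$ of regular elements. The idempotency $\phi^2 = \phi$ then reads $a^2 s^{-1} = a$; multiplying through by the non-zero-divisor $s$ (which is cancellable) gives the key relation $a^2 = as$ inside $R$.

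Next I would apply the Bezout property to the finitely generated ideal $(a,s)$: there is $d \in R$ with $(a,s) = (d)$, so $a = du$, $s = dv$, and $d = ax + sy$ for suitable $u,v,x,y \in R$. Since $s = dv$ is regular, a divisor of a non-zero-divisor is again a non-zero-divisor, so both $d$ and $v$ lie in $\RR$. Substituting $a = du$, $s = dv$ into $d = ax + sy$ gives $d = d(ux+vy)$, and cancelling the regular element $d$ yields the coprimality relation $ux + vy = 1$. Rewriting $a^2 = as$ as $d^2 u^2 = d^2 uv$ and cancelling the regular element $d^2$ gives $u^2 = uv$.

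Finally, note $\phi = as^{-1} = uv^{-1}$, so $\phi v = u$, and from $u^2 = uv$ (cancelling the regular $v$) I also get $\phi u = u$. Feeding both into the coprimality relation produces
\[
\phi = \phi(ux + vy) = (\phi u)x + (\phi v)y = ux + uy = u(x+y) \in R.
\]
Hence $\phi \in R$. Because the localization map $R \hookrightarrow \QR$ is injective (we inverted only non-zero-divisors) and the identity $\phi^2 = \phi$ holds in $\QR$, it holds already in $R$, so $\phi \in \BR$.

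I expect the delicate point to be bookkeeping of regularity: every cancellation above (of $s$, $d$, $d^2$, and $v$) must be justified by noting the relevant element is a non-zero-divisor, and it is precisely the Bezout hypothesis that lets me replace the arbitrary fraction $a/s$ by the coprime fraction $u/v$ so that the relation $ux+vy=1$ can pull $\phi$ back into $R$. No indecomposability of $R$ is needed for this argument.
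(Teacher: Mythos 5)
Your proof is correct and takes essentially the same route as the paper's: both use the Bezout property to reduce $\phi$ to a coprime fraction (your $u/v$ with $ux+vy=1$ is the paper's $e_0/s_0$ with $e_0u+s_0v=1$), cancel regular elements to get the idempotency relation ($u^2=uv$, the paper's $e_0^2=e_0s_0$), and then multiply the coprimality relation by $\phi$ to land $\phi = u(x+y)$ in $R$, exactly as the paper does. The only difference is cosmetic bookkeeping of where the cancellations happen.
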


\begin{proof}
      Let $\phi \in \BQR$ and $\phi = \frac{e}{s}$, where $s$ is a regular element of $R$. Let $eR + sR = \de R$, then $e = e_0\de$, $s = s_0\de$ and $eu + sv = \de$ for some elements $e_0, s_0, u, v \in R$. Since $s$ is a regular element,  $\de$ is a regular element as a divisor of $s$. Since  $eu + sv = \de$, then  $\de(e_0u + s_0v - 1) = 0$. Since $\de \neq 0$ and $\de$ is a regular element of $R$, we have $e_0u + s_0v - 1 = 0$. Then $\frac{e}{s}=\frac{e_0}{s_0}$, where $e_0R + s_0R = R$. Since $\frac{e_0}{s_0} \in \BQR$, then $e_0^2s_0 = e_0s_0^2$ and $s_0(e_0^2 - e_0s_0) = 0$. Since $s_0 \neq 0$ and so $s_0$ is a regular element of $R$ as a divisor of $s$, we have $e_0^2=e_0s_0$.

      Since $e_0u + s_0v = 1$, we have $e_0^2u + e_0s_0v = e_0$ and $s_0(e_0u + s_0v) = e_0$. Hence $\frac{e_0}{s_0} \in R$.
\end{proof}

\begin{proposition}\label{p1.2.2}
      Let $R$ be a commutative ring and $a$ is a (von Neumann) regular element of $R$. Then $a = eu$, where $e \in \BR$ and $u \in \UR$.
\end{proposition}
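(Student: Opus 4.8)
The plan is to produce an explicit idempotent $e$ and an explicit unit $u$ with $a = eu$, the natural candidates coming from a generalized inverse of $a$. First I would use von Neumann regularity to fix $x \in R$ with $axa = a$, and then pass to $x' = xax$; a short check gives $a x' a = a$ and $x' a x' = x'$, so without loss of generality I may assume $x$ is a \emph{reflexive} generalized inverse, i.e. $axa = a$ and $xax = x$. Setting $e = ax$ (which, $R$ being commutative, also equals $xa$), the relation $e^2 = a(xax) = ax = e$ shows $e \in \BR$. The two defining relations then yield the bookkeeping identities $ae = a \cdot xa = axa = a$ and $xe = x \cdot ax = xax = x$ (hence $ex = x$ by commutativity), and these are exactly what the rest of the argument needs.

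Next I would propose $u = a + (1 - e)$ as the unit and verify the factorization directly: $eu = ea + e(1-e) = a + e - e^2 = a$, using $ea = a$ and $e^2 = e$. Thus $a = eu$ holds for this choice independently of whether $u$ turns out to be invertible, so the whole weight of the proposition sits in the invertibility of $u$.

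The remaining, and only delicate, point is to confirm $u \in \UR$. Here I would exhibit the inverse explicitly as $x + (1-e)$ and expand
\[
(a + 1 - e)(x + 1 - e) = ax + a(1-e) + (1-e)x + (1-e)^2 .
\]
Using $a(1-e) = a - ae = 0$ and $(1-e)x = x - ex = 0$ (precisely the identities $ae = a$ and $ex = x$ from the first step), together with $(1-e)^2 = 1 - e$, the product collapses to $ax + (1-e) = e + (1-e) = 1$. Commutativity makes this single equation enough to conclude $u \in \UR$.

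I expect no genuine obstacle: once the reflexive inverse is in hand, every step is a routine computation with idempotent relations. The single idea doing the work is the choice of correction term $1-e$, so that $u = a + (1-e)$ is simultaneously invertible and satisfies $eu = a$; and it is precisely the replacement of $x$ by $xax$ that tames the cross-terms $ex$ and $ae$ and makes the candidate inverse $x + (1-e)$ function. This is just the statement that regular elements of a commutative ring are unit-regular.
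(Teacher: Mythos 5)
Your proof is correct and follows essentially the same route as the paper: both factorizations take $e = ax$ and $u = a + (1-e)$. The only difference lies in verifying $u \in U(R)$ --- the paper argues ideal-theoretically (from $uR + aR = R$ and $aR \subseteq uR$ it deduces $uR = R$), whereas you first replace $x$ by the reflexive inverse $xax$ and then exhibit the explicit inverse $x + (1-e)$, a step whose computations ($a(1-e)=0$, $(1-e)x=0$) all check out; this is a cosmetic rather than structural departure.
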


\begin{proof}
      Let $axa = a$. This implies $axax = ax$, i.e. $e = ax \in \BR$ and $e \in aR$. Since $axa = a$, then $ea = a$, i.e. $a \in eR$ and we have $aR = eR$.

      Consider the element $u = (1 - e) + a$. Since $u(1 - e) = 1 - e$, we have $uR + eR = R$. We proved that $eR = aR$, then $uR + aR = R$. Since $ue = ((1 - e) + a)e = ae = a$, then $aR \sbs uR$. Obviously, the  equality $uR + aR = R$ and inclusion $aR \sbs uR$ in a commutative ring is possible if $u \in \UR$.

      Then we have $ue = a$.
\end{proof}

\begin{proposition}\label{p1.2.3}
      Let $R$ be a commutative ring and let $a$ be a semihereditary element if and only if $a = er$, where $e \in \BR$ and $r \in \RR$.
\end{proposition}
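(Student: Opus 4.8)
The statement asserts that in a commutative ring $R$ an element $a$ is a semihereditary element (i.e. $aR$ is projective) if and only if it factors as $a = er$ with $e \in \BR$ and $r \in \RR$; the plan is to run the argument closely parallel to Proposition~\ref{p1.2.2}, with ``unit'' weakened to ``regular element'' and the equality $aR = eR$ weakened to ``$aR$ projective''. Throughout I would write $\mathrm{Ann}(a) = \{x \in R : ax = 0\}$ and invoke the standard module-theoretic fact that the cyclic module $aR \cong R/\mathrm{Ann}(a)$ is projective exactly when $\mathrm{Ann}(a)$ is a direct summand of $R$, equivalently when $\mathrm{Ann}(a)$ is generated by an idempotent.

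For the direction $(\Leftarrow)$, suppose $a = er$ with $e \in \BR$, $r \in \RR$. The aim is to show $aR \cong eR$ as $R$-modules. First I would compute the annihilator: if $ax = 0$ then $r(ex) = erx = 0$, and since $r$ is regular this forces $ex = 0$, while conversely $ex = 0$ clearly gives $ax = 0$. Hence $\mathrm{Ann}(a) = \mathrm{Ann}(e) = (1-e)R$, so $aR \cong R/(1-e)R \cong eR$. Since $eR$ is a direct summand of $R$ it is projective, and therefore $aR$ is projective, i.e. $a$ is a semihereditary element.

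For the converse $(\Rightarrow)$, assume $aR$ is projective. Then $\mathrm{Ann}(a)$ is a direct summand of $R$, and hence $\mathrm{Ann}(a) = (1-e)R$ for some $e \in \BR$. Because $1 - e \in \mathrm{Ann}(a)$ we get $a(1-e) = 0$, so $a = ea$. Mimicking Proposition~\ref{p1.2.2}, I would set $r = (1-e) + a$; then $er = e(1-e) + ea = ea = a$, which gives the required factorization $a = er$ as soon as $r$ is shown to be regular.

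The main point to check — the one step beyond the formal bookkeeping — is that $r = (1-e) + a$ is a nonzerodivisor. The plan here is: suppose $rx = 0$; multiplying by $e$ gives $ax = erx = 0$ (using $er = a$), so $x \in \mathrm{Ann}(a) = (1-e)R$ and thus $ex = 0$; multiplying $rx = 0$ instead by $1-e$ gives $(1-e)x = 0$, since $(1-e)a = 0$ and $(1-e)^2 = 1-e$ force $(1-e)r = 1-e$. Adding, $x = ex + (1-e)x = 0$, so $r \in \RR$. This settles both implications. I expect the only genuinely non-routine ingredient to be the characterization of projectivity of the cyclic ideal $aR$ through its annihilator, which is precisely what drives the forward direction; the rest is the same idempotent-plus-element construction already used in Proposition~\ref{p1.2.2}.
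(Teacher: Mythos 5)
Your proof is correct and takes essentially the same route as the paper's: both arguments identify the annihilator of $a$ with an idempotent-generated summand $(1-e)R$ (using projectivity of the cyclic module $aR$) and then verify directly that $a \pm (1-e)$ is a nonzerodivisor with $e$-multiple equal to $a$ — the paper takes $r = a - (1-e)$ where you take $r = (1-e) + a$, an immaterial difference. If anything, yours is the more complete write-up, since the paper disposes of the converse in a single sentence (``Obviously, $\{x \mid x(re) = 0\} = (1-e)R$'') and never explicitly states the criterion that a cyclic ideal is projective exactly when its annihilator is generated by an idempotent.
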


\begin{proof}
      Let $\phi R = \{\, x \mid xa = 0\}$ and $\phi \in \BR$. Since $\phi a = 0$, we have $(1 - \phi)a=a$. Let $r = a - \phi$ and $rx = 0$.

      Since $ax = \phi x$ and $(1 - \phi)a = a$, we have $(1 - \phi)ax = \varphi x$ and $(1 - \phi)\phi x = 0$. Then $\phi x = 0$ and $a x = 0$. Since $ax = 0$, we have $x \in \phi R$, i.e. $x = x\phi$. Since $x\phi = 0$, we have $x = 0$. Then we see that $r$ is a regular element of $R$. Since
      $$r(1 - \phi) = a(1 - \phi) - \phi(1 - \phi) = a(1 - \phi) = a,$$
      i.e. $a = r(1 - \phi)$. Put $1 - \phi = e$, we have $a = re$, where $e \in \BR$ and $r \in \RR$. Obviously, $\{x | x(re) = 0\} = (1 - e)R$.
\end{proof}

\section{Range conditions on the rings}

\begin{definition}\label{d2.1.1}
      A ring $R$ is said to have a (von Neumann) regular range 1, if for any $a, b \in R$ such that $aR + bR = R$ there exists $y \in R$ such that $a + by$ is a (von Neumann) regular element of $R$.
\end{definition}

Obviously, an example of ring (von Neumann) regular range 1 is a ring of stable range 1. Moreover, we have the following result.

\begin{proposition}\label{p2.1.2}
     A commutative ring of (von Neumann) regular range 1 is a ring of stable range 1.
\end{proposition}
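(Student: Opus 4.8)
The plan is to start from an arbitrary pair $a, b \in R$ with $aR + bR = R$ and manufacture a $t \in R$ making $a + bt$ a unit, routing through the regular range $1$ hypothesis as an intermediate step. First I would apply Definition \ref{d2.1.1} to obtain $y \in R$ for which $c := a + by$ is a (von Neumann) regular element. Because $a = c - by$, the original comaximality $aR + bR = R$ transfers to $cR + bR = R$; in particular there exist $p, q \in R$ with $cp + bq = 1$. So far nothing has been gained except that the representative $c$ of $a$ modulo $bR$ is now regular rather than arbitrary.

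Next I would invoke Proposition \ref{p1.2.2} to factor the regular element as $c = eu$ with $e \in \BR$ and $u \in \UR$. The value of this factorization is that $cR = eR$, so the coprimality becomes $eR + bR = R$ and the problem collapses to a statement about the idempotent $e$ alone. Concretely, multiplying $cp + bq = 1$ by $1 - e$ and using $(1-e)c = (1-e)eu = 0$ yields the key relation $(1-e)bq = 1-e$, which says that $b$ is invertible on the complementary component $(1-e)R$.

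The decisive step is then the choice $t := y + (1-e)q$, giving $a + bt = c + b(1-e)q = eu + b(1-e)q$. I would verify that this is a unit by exhibiting the explicit inverse $eu^{-1} + (1-e)$: expanding the product and using $e(1-e) = 0$, $(1-e)^2 = 1-e$, commutativity, and $b(1-e)q = (1-e)bq = 1-e$, all cross terms vanish and the total collapses to $e + (1-e) = 1$. Hence $(a+bt)R = R$, which is exactly the stable range $1$ condition, so the proof is complete.

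The main obstacle is the middle step, namely upgrading a \emph{regular} element to a \emph{unit}, since a regular element need not be a unit and the definitions would otherwise be genuinely different. The argument must therefore use both ingredients at once: the idempotent–unit factorization of Proposition \ref{p1.2.2} isolates the "non-unit part" of $c$ as the idempotent $e$, while the comaximality relation $(1-e)bq = 1-e$ guarantees that the correction term $b(1-e)q$ can repair $c$ precisely on the component $(1-e)R$ where it fails to be invertible. Commutativity is used essentially here, consistent with its role in Proposition \ref{p1.2.2} and with the standing hypothesis that $R$ is commutative.
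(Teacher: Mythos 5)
Your proposal is correct and follows essentially the same route as the paper's proof: both pass through Definition \ref{d2.1.1} to get a regular representative, factor it as $eu$ with $e \in \BR$, $u \in \UR$ via Proposition \ref{p1.2.2}, derive $(1-e)bq = 1-e$ from the comaximality $eR + bR = R$, and correct by a term of the form $b(1-e)\cdot(\text{--})$. The only cosmetic difference is in the final verification: the paper multiplies the relation $e + b(1-e)v = 1$ by the unit so that $a + bs$ equals the unit itself, whereas you exhibit the explicit inverse $eu^{-1} + (1-e)$ of $eu + b(1-e)q$; both computations are valid.
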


\begin{proof}
      Let $R$ be a ring of (von Neumann) regular range 1 and $aR + bR = R$. Then there exists an element $y \in R$ such that $a + by = r$ is a (von Neumann) regular element of $R$. By Proposition \ref{p1.2.2}, we have $a + by = r = ek$, where $e \in \BR$ and $k \in \UR$.

      Note that, since  $aR + bR = R$, we have  $eR + bR = R$. Then  $eu + bv = 1$ for some elements $u, v \in R$. Since $1 - e = (1 - e)eu + (1 - e)bv$, we have $1 - e = (1 - e)bv$, and $e + b(1 - e)v = 1$. Then $ek + b(1 - e)kv = k$.

       Thus, we have $a + bs = k$ for some element $s \in R$, i.e. $(a + bs)R = R$. We have that $R$ is a ring of stable range 1.
\end{proof}

Then we have the following  result.

\begin{theorem}\label{t2.1.3}
      For a commutative ring the following conditions are equivalent:

            1. \; $R$ is a ring of stable range 1;

            2. \; $R$ is a ring of (von Neumann) regular range 1.
\end{theorem}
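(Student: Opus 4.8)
The plan is to prove the equivalence by establishing the two implications separately, observing that the bulk of the work has already been carried out. The implication $2 \Rightarrow 1$ is precisely the content of Proposition \ref{p2.1.2}, which I would simply invoke: a commutative ring of (von Neumann) regular range 1 is a ring of stable range 1. So nothing new needs to be done in that direction.

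For the implication $1 \Rightarrow 2$, I would argue directly from the definitions. Suppose $R$ has stable range 1 and let $a, b \in R$ satisfy $aR + bR = R$. By the definition of stable range 1, there exists $t \in R$ such that $(a + bt)R = R$, which means $a + bt \in \UR$. The key observation is then that every unit is a (von Neumann) regular element: if $u \in \UR$ with inverse $u^{-1}$, then $u u^{-1} u = u$, so $u$ is (von Neumann) regular with witness $x = u^{-1}$. Taking $y = t$ in Definition \ref{d2.1.1}, the element $a + by = a + bt$ is a (von Neumann) regular element of $R$, and hence $R$ is a ring of (von Neumann) regular range 1.

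I expect no genuine obstacle here, since the two directions reduce to a cited proposition and the elementary fact that units are (von Neumann) regular. The only point worth stating explicitly — and the one a careful reader will want to see — is the inclusion $\UR \subseteq \{\, a \in R \mid a \text{ is (von Neumann) regular}\,\}$, which makes the stable range 1 condition a special case of the regular range 1 condition. Combining this with Proposition \ref{p2.1.2} closes the loop and gives the equivalence.
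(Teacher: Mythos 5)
Your proposal is correct and matches the paper's (implicit) argument exactly: the paper derives Theorem \ref{t2.1.3} from Proposition \ref{p2.1.2} for the implication $2 \Rightarrow 1$, and for $1 \Rightarrow 2$ it relies on the remark preceding that proposition that a ring of stable range 1 is a ring of (von Neumann) regular range 1, which rests on the same elementary fact you spell out, namely that every unit $u$ is (von Neumann) regular via $u u^{-1} u = u$. Your only addition is making that inclusion $\UR \subseteq \{\, a \in R \mid a \text{ is (von Neumann) regular}\,\}$ explicit, which the paper leaves as "obvious."
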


\begin{definition}\label{d2.1.4}
      A ring $R$ is said to have a semihereditary range 1, if for any $a, b \in R$ such that $aR + bR = R$ there exists $y \in R$ such that $a + by$ is a semihereditary right element of $R$.
\end{definition}

Obviously, an  example of a ring of semihereditary range 1 is a ring of stable range 1 and a commutative semihereditary ring.

A special place in the class of rings of  semihereditary range 1 is taken by semihereditary local rings.

\begin{definition}\label{d2.1.5}
      A commutative ring $R$ is a semihereditary local ring if for any $a, b \in R$ such that $aR + bR = R$ either $a$ or $b$ is a semihereditary element of $R$.
\end{definition}

Obviously, an example of a semihereditary local ring is a (von Neumann) regular local ring and a semihereditary ring. A commutative domain (which is not a local ring) is a semihereditary local ring which is not a (von Neumann) regular local ring.

\begin{proposition}\label{p2.1.6}
     A commutative semihereditary local ring is a ring of semihereditary range 1.
\end{proposition}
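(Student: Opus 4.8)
The plan is to start from the hypothesis $aR+bR=R$ and split on the semihereditary local property (Definition~\ref{d2.1.5}), which guarantees that at least one of $a,b$ is a semihereditary element. If $a$ itself is semihereditary, then taking $y=0$ gives $a+by=a$, the witness required by Definition~\ref{d2.1.4}; so the whole content lies in the case in which only $b$ is known to be semihereditary.

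In that case I would first put $b$ into the normal form supplied by Proposition~\ref{p1.2.3}, writing $b=er$ with $e\in\BR$ and $r\in\RR$, so that $\{x\mid xb=0\}=(1-e)R$. Picking $u,v$ with $au+bv=1$ and multiplying by $1-e$ (which annihilates $b$) yields $(1-e)=(1-e)a\,(1-e)u$, i.e.\ $(1-e)a$ is invertible in the corner ring $(1-e)R$. Decomposing along $R\cong eR\times(1-e)R$, every admissible correction $a+by=a+er\,y$ leaves the $(1-e)$-component equal to the unit $(1-e)a$ and only moves the $e$-component inside the coset $ea+(er)(eR)$. Since an element of the product is semihereditary exactly when each component is, and the $(1-e)$-component is already a unit, the problem collapses to the corner: find $z\in eR$ with $ea+(er)z$ a semihereditary element of $eR$, where $er$ is a nonzerodivisor of $eR$ and $ea,er$ generate $eR$.

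To settle this I would aim for the stronger conclusion that $ea+(er)z$ can be made regular in $eR$; a regular element is semihereditary, being $1\cdot r$ in Proposition~\ref{p1.2.3}. Multiplying $au+bv=1$ by $e$ gives $(ea)u+(er)v=e$, so $ea$ is a unit modulo the regular element $er$. I would combine this with the fact that the corner $eR$ again satisfies the semihereditary local condition (if $p(eR)+q(eR)=eR$, then replacing $p$ by $p+(1-e)$ produces a coprime pair in $R$, so the hypothesis descends), and adjust $ea$ by a multiple of $er$ into a nonzerodivisor of $eR$. Transporting the resulting regular element back through the decomposition gives $a+by$ regular, hence semihereditary.

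The main obstacle is exactly this last construction in the corner ring. Unlike the stable range $1$ argument of Proposition~\ref{p2.1.2}, here $er$ need not be a unit of $eR$, so one cannot simply solve $ea+(er)z=\text{unit}$; instead one must use the semihereditary local structure of $eR$ together with the invertibility of $ea$ modulo $er$ to certify that some representative of the coset $ea+(er)(eR)$ has zero (equivalently, summand) annihilator. Everything preceding it — the case split, the normal form for $b$, and the reduction to a single corner — is routine bookkeeping with the idempotent $e$.
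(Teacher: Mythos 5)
There is a genuine gap, and you point at it yourself: the ``main obstacle'' in the corner ring is never resolved, and in fact your reduction is circular. After the Peirce decomposition along $e$, what remains in $eR$ is precisely the original problem transported to a smaller ring: a coprime pair $ea,\,er$ in a ring that (as you verify) again satisfies the semihereditary local condition, with one generator regular, for which you must produce a combination $ea+(er)z$ that is regular (or at least semihereditary) in $eR$. Nothing about the corner makes this easier than the problem you started with --- $er$ need not be a unit of $eR$, exactly as $b$ need not be a unit of $R$ --- so the normal form for $b$, the splitting $R\cong eR\times(1-e)R$, and the descent of the hypothesis to $eR$ (all of which are individually correct) do not advance the argument. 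The underlying misstep is strategic: you applied Definition~\ref{d2.1.5} to the pair $(a,b)$, and the alternative ``$b$ is semihereditary'' carries no usable information about the coset $a+bR$.

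The paper's proof avoids your case 2 entirely by applying the local condition to a \emph{different} coprime pair, chosen so that both alternatives are witnesses. Since $b=(a+b)-a$, one has $aR+(a+b)R\supseteq aR+bR=R$, so Definition~\ref{d2.1.5} applied to $(a,\,a+b)$ says that either $a$ is semihereditary --- take $y=0$, as in your first case --- or $a+b$ is semihereditary --- take $y=1$. That is the entire proof; no normal form for $b$, no idempotent decomposition, and no corner construction are needed. (The same re-pairing trick appears again in the paper in Propositions~\ref{p2.1.13} and~\ref{p2.1.18}, and your stronger goal of making the combination regular rather than merely semihereditary is also unnecessary here: regularity is recovered afterwards, and in general, by Theorem~\ref{p2.1.8}.)
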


\begin{proof}
      Let $R$ be a commutative semihereditary local ring and $aR + bR = R$. If $a$ is a semihereditary element, the representation $a + b0$ is as tequired. It $a$ is not  semihereditary, by condition $aR + (a + b)R = R$, the element $a + b1$ is  semihereditary.
\end{proof}

The  ring $\Z_{36}$ is not a semihereditary local ring, but $\Z_{36}$ is a ring of semihereditary range 1 (see \cite{Cont}).

\begin{definition}\label{d2.1.7}
      A ring $R$ is said to have regular range 1 if for any $a, b \in R$ such that $aR + bR = R$ there exists $y \in R$ such that $a + by$ is a regular element of $R$.
\end{definition}

\begin{theorem}\label{p2.1.8}
      For a commutative ring $R$ the following conditions are equivalent

            1) \; $R$ is a ring of regular range 1;

            2) \; $R$ is a ring of semihereditary range 1.
\end{theorem}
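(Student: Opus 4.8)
My plan is to prove the two implications separately, exploiting the fact that every regular element is trivially semihereditary, which makes one direction immediate and concentrates all the work in the other.

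For $2)\Rightarrow 1)$ being the substantive direction, I first dispose of $1)\Rightarrow 2)$. Every $r\in\RR$ is semihereditary, since the factorization $r=1\cdot r$ with $1\in\BR$ and $r\in\RR$ is exactly the decomposition furnished by Proposition \ref{p1.2.3}. Hence if $a+by$ is a regular element it is automatically a semihereditary element, and so a ring of regular range $1$ is a ring of semihereditary range $1$.

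For $2)\Rightarrow 1)$ I would start from a coprime pair $aR+bR=R$ and invoke semihereditary range $1$ to obtain $y$ with $a+by$ semihereditary; by Proposition \ref{p1.2.3} I write $a+by=es$ with $e\in\BR$ and $s\in\RR$. The point is then to replace $y$ by a corrected $y'$ so that $a+by'$ becomes a genuine non-zero-divisor. First I note that adding a multiple of $b$ leaves the ideal generated together with $bR$ unchanged, so $esR+bR=(a+by)R+bR=aR+bR=R$; since $es\in eR$ this gives $eR+bR=R$. Choosing $u,v$ with $eu+bv=1$ and multiplying by $1-e$ (using $(1-e)e=0$) yields $1-e=b(1-e)v$, so that $1-e\in bR$. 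I then set $y'=y+(1-e)v$, which gives $a+by'=es+b(1-e)v=es+(1-e)$.

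The hard part is checking that this candidate $es+(1-e)$ is regular, but it is short: if $\big(es+(1-e)\big)x=0$, multiplying by $1-e$ gives $(1-e)x=0$, hence $x=ex$; multiplying by $e$ gives $esx=0$, i.e.\ $s(ex)=0$, and since $s\in\RR$ we conclude $ex=0$, whence $x=0$. Thus $a+by'=es+(1-e)\in\RR$, and $R$ has regular range $1$. The only delicate steps are engineering the corrector $(1-e)v$ out of the relation $eR+bR=R$ and the verification of the last display; everything else is routine manipulation of the coprimality condition together with Propositions \ref{p1.2.2} and \ref{p1.2.3}.
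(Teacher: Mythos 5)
Your proof is correct and follows essentially the same route as the paper: obtain $a+by=es$ with $e\in\BR$, $s\in\RR$ via Proposition \ref{p1.2.3}, deduce $eR+bR=R$, write $eu+bv=1$, and correct $a+by$ by a multiple of $b$ built from $(1-e)v$. The only difference is cosmetic: the paper multiplies the identity $e+b(1-e)v=1$ by $s$ so that the corrected element is the already-known regular element $s$ itself, whereas you add $1-e$ and then verify by hand that $es+(1-e)$ is a non-zero-divisor --- a short extra check that the paper's choice of corrector renders unnecessary.
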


%\begin{proposition}\label{p2.1.8}
%     A ring of semihereditary range 1 is a ring of regular range 1.
%\end{proposition}

\begin{proof}
      A regular element is a semihereditary element and then if $R$ is a ring of regular range 1 then $R$ is a ring of semihereditary range 1.

      Let $R$ be a ring of semihereditary range 1 and  $aR + bR = R$. Then there exists $y \in R$ such that $a + by=er$, where $e \in \BR$, $r \in \RR$. Since $aR + bR = R$, we have $eR+ bR = R$. Then $eu + bv = 1$ for some elements $u, v \in R$. Since $1 - e = (1 - e)eu + (1 - e)bv$ we have $e + b(1 - e)v = 1$ and $er + br(1 - e)v = r$. Since $a + by = er$ and $er + br(1 - e)v = r$, we have $a + bs = r$ for some element $s \in R$. Then $R$ is a ring of regular range 1.
\end{proof}

\begin{proposition}\label{p2.1.9}
     A classical ring of quotients $\QR$ of a commutative Bezout ring $R$ of regular range 1 is a ring of stable range 1.
\end{proposition}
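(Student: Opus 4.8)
The plan is to verify the stable range 1 condition directly in $\QR$, exploiting the fact that in a classical ring of quotients every regular element of $R$ becomes a unit. So let $\alpha, \beta \in \QR$ satisfy $\alpha \QR + \beta \QR = \QR$. First I would clear denominators: since the regular elements $\RR$ form a multiplicatively closed set, I can write $\alpha = a/s$ and $\beta = b/s$ with a common regular denominator $s$. Because $s$ is a unit in $\QR$, the hypothesis becomes $a\QR + b\QR = \QR$ with $a, b \in R$, and choosing a common denominator once more in a relation $a\xi + b\eta = 1$ produces elements $u, v \in R$ and a regular $t \in \RR$ with $au + bv = t$.

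The key step is to promote this comaximality modulo a regular element into genuine comaximality inside $R$, and this is exactly where the Bezout hypothesis is used. Since $R$ is Bezout, $aR + bR = dR$ for some $d$; as $t = au+bv \in dR$ and $t$ is regular, $d$ is a divisor of a regular element and hence itself regular. Writing $a = a_0 d$, $b = b_0 d$ and cancelling the regular element $d$ in a relation $d = au_0 + bv_0$ (the same cancellation used in Proposition~\ref{p1.2.1}), I obtain $a_0 R + b_0 R = R$.

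Now I invoke the hypothesis that $R$ has regular range 1: applied to the unimodular pair $(a_0, b_0)$ it yields $y \in R$ with $a_0 + b_0 y = r \in \RR$. Then
\[
\alpha + \beta y = \frac{a_0 d + b_0 d y}{s} = \frac{d(a_0 + b_0 y)}{s} = \frac{dr}{s},
\]
and since $d, r, s$ are all regular, $dr$ is regular (a product of nonzerodivisors is a nonzerodivisor), so $dr/s$ is a ratio of regular elements and therefore a unit of $\QR$. Hence $(\alpha + \beta y)\QR = \QR$ with $y \in R \sbs \QR$, which is precisely the stable range 1 condition.

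I expect the only real obstacle to be the middle step: the comaximality $\alpha\QR + \beta\QR = \QR$ only forces a regular element into $aR + bR$, not a unit, so without some structural input one cannot reduce to a pair $(a_0, b_0)$ that is unimodular in $R$ and hence eligible for the regular range 1 hypothesis. The Bezout property is what supplies the principal generator $d$ and lets the regular-element cancellation go through; everything else is routine bookkeeping with the localization.
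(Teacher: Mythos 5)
Your proposal is correct and follows essentially the same route as the paper's own proof: clear denominators to get $au+bv=t$ with $t\in\RR$, use the Bezout property to write $aR+bR=dR$, observe $d$ is regular as a divisor of $t$ and cancel it to obtain $a_0R+b_0R=R$, apply regular range 1 to get $a_0+b_0y=r\in\RR$, and conclude that $\frac{a}{s}+\frac{b}{s}y=\frac{dr}{s}\in\UQR$. Your explicit common-denominator reduction at the start is a minor bookkeeping detail the paper leaves implicit; otherwise the two arguments coincide step for step.
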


\begin{proof}
      Let
      $$\frac{a}{s}\QR + \frac{b}{s}\QR = \QR.$$
      Then $au + bv = t$. where $u, v \in R$ and $t \in \RR$. Since $R$ is a commutative Bezout ring, we have $aR + bR = dR$ for some element $d \in R$. Then $a = a_0d$, $b = b_0d$ and
      $ax + by = d$ for some elements $a_0, b_0, x, y \in R$. Since $au + bv = t$, we have
      $d(a_0u + b_0v) = t$. Then $d$ is a regular element as the divisor of a regular element $t$.

      Since $d(a_0x + b_0y - 1) = 0$ and $d \neq 0$, we have $a_0x + b_0y - 1 = 0$ i.e. $a_0R + b_0R = R$. Since $R$ is a ring of regular range 1, we have $a_0 + b_0k = r$ regular element of $R$ for some element $k \in R$. Then $a + bk = rd \in \RR$. So we have $\frac{a}{s} + \frac{b}{s}k = \frac{rd}{s}$.

      Since $\frac{rd}{s} \in \UQR$ we have $(\frac{a}{s} + \frac{b}{s}k)\QR = \QR$ i.e $\QR$ is a ring of stable range 1.
\end{proof}

Here are some examples of  rings of regular range 1.

\begin{definition}\label{d2.1.10}
      A commutative ring $R$ is a regular local ring if for any $a \in R$ either $a$ or $1 - a$ is a regular element.
\end{definition}

\begin{proposition}\label{p2.1.11}
     A commutative regular local Bezout ring is a ring of stable range 2.
\end{proposition}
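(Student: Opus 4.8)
The plan is to prove that $R$ is an Hermite ring and then invoke Theorem~\ref{t1.1.1}, which says that a commutative Bezout Hermite ring has stable range $2$. For the Hermite step I would use the standard characterization (valid for any commutative Bezout ring and routine to derive from the definition of an Hermite ring via an invertible $2\times 2$ matrix of determinant $1$): $R$ is Hermite precisely when every pair $a,b$ with $aR+bR=dR$ admits a factorization $a=a_1 d$, $b=b_1 d$ with \emph{comaximal} cofactors, $a_1R+b_1R=R$. As orientation I note that a regular local ring is automatically indecomposable, since a nontrivial idempotent $e$ would make both $e$ and $1-e$ zero divisors, contradicting Definition~\ref{d2.1.10}; but the argument below uses the full strength of that definition, not just indecomposability.

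First I would lay out the Bezout data. Given $a,b\in R$, the Bezout property gives $aR+bR=dR$. The case $d=0$ (that is, $a=b=0$) is trivial, so assume $d\ne 0$. From $aR,bR\subseteq dR$ and $d\in aR+bR$ I obtain $a=a_1d$, $b=b_1d$ and $d=a\alpha+b\beta$ for suitable $\alpha,\beta$, whence $d(1-t)=0$ with $t=a_1\alpha+b_1\beta$. This is where Definition~\ref{d2.1.10} does the decisive work: applied to the element $1-t$, either $1-t$ or $t$ lies in $\RR$; but $1-t\in\RR$ is impossible, since $d(1-t)=0$ with $d\ne 0$ would force $d=0$. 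Hence $t=a_1\alpha+b_1\beta$ is a nonzero divisor lying in $a_1R+b_1R$.

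The main obstacle is that a nonzero divisor inside $a_1R+b_1R$ does not by itself make $a_1,b_1$ comaximal, because $t$ is only a nonzero divisor and need not be a unit. I would clear this with one more application of Bezout: write $a_1R+b_1R=eR$. Since $t\in eR$ is a nonzero divisor, its divisor $e$ is a nonzero divisor as well, so $e\in\RR$. Writing $a_1=a_2e$ and $b_1=b_2e$, the identity $e(a_2R+b_2R)=a_1R+b_1R=eR$ can be cancelled by the regular element $e$: from $e=ej$ with $j\in a_2R+b_2R$ and $e(1-j)=0$ I get $j=1$, so $a_2R+b_2R=R$. Finally $a=a_2(ed)$, $b=b_2(ed)$, and $ed$ is a greatest common divisor because $edR\subseteq dR=aR+bR\subseteq edR$. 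This is exactly the required comaximal factorization, so $R$ is an Hermite ring, and Theorem~\ref{t1.1.1} yields that $R$ has stable range $2$.
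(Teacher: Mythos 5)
Your proof is correct and follows essentially the same route as the paper's: both use the regular-local property to show the cofactor combination ($t$, resp.\ $a_0u+b_0v$) must be a regular element, then perform a second Bezout step and cancel a regular gcd to obtain comaximal cofactors, and conclude via the Gillman--Henriksen characterization of Hermite rings together with Theorem~\ref{t1.1.1}. The only cosmetic differences are that you apply Definition~\ref{d2.1.10} to $1-t$ rather than to $t$ and handle the second gcd uniformly instead of splitting off the unit case.
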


\begin{proof}
      Let $R$ be a regular local Bezout ring. Let $a, b$ by nonzero elements of $R$. Since $R$ is a commutative Bezout ring, we have $aR + bR = dR$. Then we have $au + bv = d$, $a = a_0d$, $b = b_0d$ for some elements $a_0, b_0, u, v \in R$. Since $d(a_0u + b_0v - 1) = 0$, by the definition of a ring $R$ we see that either $a_0u + b_0v$ or $a_0u + b_0v - 1$ is a regular element of $R$. If $a_0u + b_0v - 1$ is a regular element, by $d(a_0u + b_0v - 1) = 0$ we have $d = 0$, i.e. $a = b =0$ and this  is impossible. Let $a_0u + b_0v = r$ be a regular element of $R$.

      Let $a_0R + b_0R = \de R$. If $\de \notin \UR$ we have $a_0x + b_0y = \de$, $a_0 = \de a_1$, $b_0 = \de b_1$ for some elements $a_1, b_1, x, y \in R$. This implies $\de (a_1u + b_1v) = a_0u + b_0v = r$. Since $r \in \RR$, we have $\de \in \RR$.

      This implies  $\de (a_1x + b_1y - 1) = 0$ and, since $\de \neq 0$, we have $a_1x + b_1y - 1 = 0$ i.e. $a_1R + b_1R = R$. Thus, we have $a = d\de a_1$, $b = d\de b_1$, $a_1R + b_1R = R$. By \cite{GH}, $R$ is an Hermite ring and, by Theorem \ref{t1.1.1}, we obtain that $R$ is a ring of stable range 2.
\end{proof}

In the class of rings of regular range 1 allocate of a class of ring of idempotent regular range 1.

\begin{proposition}\label{p2.1.12}
     A ring $R$ is said to be a ring of idempotent regular range 1 if for any element $a, b \in R$ such that $aR + bR = R$ there exists an idempotent $e \in \BR$ and a regular element $r \in \RR$ such that $a + be = r$.
\end{proposition}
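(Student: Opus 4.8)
The displayed statement is worded with \emph{is said to be \dots\ if \dots}, so it operates as the \emph{definition} of the class of rings of idempotent regular range $1$; as an isolated assertion it carries no content to be proved. What the preceding sentence really claims — that within the rings of regular range $1$ one singles out this new subclass — and what I would verify, is that the newly named class does sit inside the class already introduced. This is the natural thing to establish immediately after making such a definition, and it is where the refinement (requiring the multiplier to be idempotent, rather than arbitrary) becomes visible.

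The plan is first to record that containment. Suppose $R$ has idempotent regular range $1$ and let $aR + bR = R$. By the defining property there are $e \in \BR$ and $r \in \RR$ with $a + be = r$. Since $e$ is itself an element of $R$, taking $y = e$ yields $a + by = r \in \RR$, so $R$ satisfies Definition \ref{d2.1.7} and is a ring of regular range $1$; if $R$ is commutative, Theorem \ref{p2.1.8} then upgrades this to semihereditary range $1$ as well. This is a one-line verification and I expect no obstacle in it. I would also remark that the reverse inclusion is not asserted: the point of the definition is precisely that the witnessing multiplier may be forced to be idempotent, which is a strictly stronger demand.

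The substantive companion result, announced in the abstract, is that every commutative ring of idempotent regular range $1$ is almost clean, and a proof proposal really belongs there. To reach it I would apply the defining property to a coprime pair built from an arbitrary $a \in R$ — for instance to $a$ and $1$, for which $aR + R = R$ always holds — obtaining $e \in \BR$ and $r \in \RR$ with $a + e = r$, that is $a = r - e$. The hard part will be converting this ``regular minus idempotent'' presentation into the required ``regular plus idempotent'' form $a = r' + f$ with $r' \in \RR$ and $f \in \BR$: the element $-e$ is not idempotent, and the naive rewritings $a = (r-1)+(1-e)$ or $a = (r-2e)+e$ demand that $r-1$ or $r-2e$ remain non-zero-divisors, which is not automatic. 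I would control this using the decomposition $R \cong eR \times (1-e)R$ induced by the idempotent, checking membership in $\RR$ componentwise and choosing the coprime pair and the idempotent so that the regular factor survives the adjustment; that component-wise control of regularity is where the real work lies.
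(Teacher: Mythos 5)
You are right that Proposition \ref{p2.1.12}, despite the environment it sits in, is a definition: the paper attaches no proof to it, and your one-line verification that the class lies inside the rings of regular range 1 (take $y=e$ in Definition \ref{d2.1.7}, then invoke Theorem \ref{p2.1.8} in the commutative case) is correct and is all the containment requires.

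Where your proposal goes astray is in the excursion toward Proposition \ref{p2.1.15} (almost cleanness). You apply the defining property to the pair $(a,1)$, obtain $a+e=r$, i.e.\ $a=r-e$, and then rightly observe that converting ``regular minus idempotent'' into ``regular plus idempotent'' is problematic; but the componentwise repair you sketch does not close that gap. Writing $R\cong eR\times(1-e)R$, one has $e=(1,0)$, $r=(r_1,r_2)$ with each $r_i$ regular, and $a=(r_1-1,\,r_2)$; the natural candidates $f\in\{0,\,e,\,1-e,\,1\}$ then require $r_1-1$, $r_1-2$, or $r_2-1$ to be a non-zero-divisor, none of which is automatic, so the ``component-wise control of regularity'' has nothing to grip. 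The difficulty is an artifact of the choice $b=1$. The paper's proof of Proposition \ref{p2.1.15} instead applies the definition to the pair $(a,-1)$ --- note $aR+(-1)R=R$ holds for every $a$ --- which yields $e\in\BR$ and $r\in\RR$ with $a+(-1)e=r$, i.e.\ $a=r+e$, the almost clean decomposition on the nose. So the ``real work'' you anticipate does not exist: the result is a one-line consequence of choosing the sign of $b$ correctly.
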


An obvious example of a ring of idempotent regular range 1 is a ring of idempotent stable range 1, i.e a commutative clean ring.

\begin{proposition}\label{p2.1.13}
     A commutative regular local ring is a ring of idempotent regular range 1.
\end{proposition}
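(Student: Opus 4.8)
The plan is to exploit the regular local condition together with the freedom to choose the idempotent $e$ to be either $0$ or $1$. Since both $0$ and $1$ lie in $\BR$, it suffices to show that for any $a, b \in R$ with $aR + bR = R$, at least one of $a$ or $a + b$ is a regular element; for then $a + b\cdot 0 = a$, respectively $a + b\cdot 1 = a + b$, serves as the required regular element $r$, with $e = 0$, respectively $e = 1$.

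First I would replace $b$ by $a + b$ in the comaximality relation. Because $b = (a + b) - a$, the ideal $aR + (a+b)R$ contains $a$, $a+b$, and hence $b$, so $aR + (a+b)R \supseteq aR + bR = R$; thus $aR + (a+b)R = R$ and there exist $p, q \in R$ with $ap + (a+b)q = 1$.

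Next I would apply the regular local hypothesis to the single element $ap$. By Definition \ref{d2.1.10}, either $ap$ is a regular element, or $1 - ap = (a+b)q$ is a regular element. In the first case $a$ is a regular element, being a divisor of the regular element $ap$ (if $az = 0$ then $(ap)z = p(az) = 0$, forcing $z = 0$ by commutativity); taking $e = 0$ and $r = a$ settles this case. In the second case $a + b$ is a regular element, being a divisor of $(a+b)q$ by the same reasoning; taking $e = 1$ and $r = a + b$ settles it. In both cases $a + be = r$ with $e \in \BR$ and $r \in \RR$, which is exactly the condition in Proposition \ref{p2.1.12}.

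The step I expect to be the crux is the choice of element to feed into the regular local condition. Applying it naively to $au$ coming from an original identity $au + bv = 1$ yields only ``$a$ regular or $b$ regular,'' and $b$ being regular does not by itself produce a regular value of $a + be$ for an idempotent $e$. Rewriting the relation in terms of $a$ and $a + b$ first, and then testing the product $ap$, is what forces the two alternatives to land on precisely the two admissible idempotent substitutions $e = 0$ and $e = 1$; this reduction is the whole content of the argument, the remaining divisor-of-a-regular-element observation being routine and already used elsewhere in the paper.
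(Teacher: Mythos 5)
Your proof is correct and takes essentially the same route as the paper's: both choose the idempotent from $\{0,1\}$ and reduce to the comaximal pair $a$, $a+b$, so that either $a+b\cdot 0$ or $a+b\cdot 1$ is regular. The only difference is that you spell out the step the paper leaves implicit, namely that applying Definition \ref{d2.1.10} to $ap$ in $ap+(a+b)q=1$, together with the fact that a divisor of a regular element is regular, forces $a$ or $a+b$ to be regular.
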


\begin{proof}
      Let $R$ be a regular local ring and $aR + bR = R$. If $a$ is a regular element, then  we have a representation $a + b0 = a$. If $a$ is not a regular element, since $aR + (a + b)R = R$, the element $a + b1$ is regular.
\end{proof}

\begin{theorem}\label{t2.1.14}
      A commutative semihereditary ring is a ring of idempotent regular range 1.
\end{theorem}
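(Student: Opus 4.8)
The plan is to exploit the fact that in a commutative semihereditary ring every principal ideal $Ra$ is finitely generated and hence projective, so that \emph{every} element of $R$ is a semihereditary element. Proposition~\ref{p1.2.3} then applies to our fixed $a$, letting me write $a = er$ with $e \in \BR$ and $r \in \RR$. The purpose of this decomposition is that it pins down the annihilator of $a$: since $r$ is a regular element, $xa = 0 \iff xe = 0$, so $\{\,x \mid xa = 0\,\} = (1-e)R$, and moreover $ea = a$. This idempotent description of $\{x\mid xa=0\}$ is what the whole argument will turn on.

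Given a coprimality relation $aR + bR = R$, my candidate for the idempotent demanded by Proposition~\ref{p2.1.12} is $1 - e$, and I claim that $a + b(1-e)$ is the regular element we seek. To verify regularity I would suppose $(a + b(1-e))x = 0$, i.e. $ax + b(1-e)x = 0$, and multiply through by $e$. Using $ea = a$ and the orthogonality $e(1-e) = 0$, the $b$-term is annihilated and the equation collapses to $ax = 0$. Hence $x$ lies in the annihilator $(1-e)R$, so $x = (1-e)x$; feeding this back into $ax + b(1-e)x = 0$ and using $ax = 0$ forces $b(1-e)x = bx = 0$ as well. Finally the coprimality gives $au + bv = 1$ for some $u, v \in R$, and multiplying by $x$ yields $x = u(ax) + v(bx) = 0$. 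Thus $a + b(1-e)$ has zero annihilator, i.e. it is a regular element of $R$.

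Taking $e' = 1 - e \in \BR$ and $r' = a + b(1-e) = a + be' \in \RR$ then exhibits precisely the data required by Proposition~\ref{p2.1.12}, so $R$ is a ring of idempotent regular range $1$.

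The one genuinely delicate point — and the step I would watch most carefully — is the correct choice of idempotent: it must be the generator $1 - e$ of the annihilator of $a$, not $e$ itself. The regularity check hinges on using the relation $e(1-e) = 0$ exactly once to strip off the $b$-term and recover $ax = 0$; once that is secured, membership of $x$ in the annihilator and the relation $aR + bR = R$ dispatch the rest routinely. No deeper structure of semihereditary rings beyond the decomposition of Proposition~\ref{p1.2.3} is needed.
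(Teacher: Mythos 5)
Your proof is correct and takes essentially the same route as the paper's: decompose $a = er$ with $e \in \BR$, $r \in \RR$ via Proposition~\ref{p1.2.3}, propose $a + b(1-e)$ as the regular element, and show any $x$ with $(a+b(1-e))x = 0$ vanishes by multiplying by $e$ to get $ax = 0$, deducing $bx = 0$, and invoking $au + bv = 1$. The only cosmetic difference is that the paper treats the case $e = 1$ separately, whereas your argument handles it uniformly (the idempotent $1-e = 0$ is then allowed by Proposition~\ref{p2.1.12}).
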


\begin{proof}
      Let $R$ be a commutative semihereditary ring and $aR+bR=R$. By \cite{McG} and Proposition \ref{p1.2.3}, we have $a=er$ where $e$ is an idempotent and $r$ is a regular element. Note if $e=1$, we have that $a$ is a regular element and $a+b\cdot 0$ is a necessary representation. If $e\neq 1$, let $s=a+b(1-e)$. Show that $s$ is a regular element of $R$. Let $sx=0$, then $ax=-b(1-e)x$. Since $a=er$, we have
      $$erx=(1-e)(-b)x.$$

      Thus, we have $e\cdot erx=e(1-e)(-b)=0$. Since $erx=exr=0$ and $r$ is a regular nonzero element, we have  $ex=0$ and $b(1-e)x=0$, therefore $bx=bex=0$. Hence we have $ax=0$ and $bx=0$. Since $aR+bR=R$ we have $au+bv=1$ for some elements $u,v\in R$. Then $x=axu+bxv=0$ and $s=a+b(1-e)$ is a regular element. Thus, we have that $R$ is a ring of idempotent regular range 1.
\end{proof}

Consequently, we have

\begin{proposition}\label{p2.1.15}
      A commutative ring of idempotent regular range 1 is an almost clean ring.
\end{proposition}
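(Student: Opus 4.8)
The plan is to verify directly that every element of $R$ admits a decomposition as a regular element plus an idempotent, which is exactly the defining property of an almost clean ring. The key observation is that the idempotent regular range 1 hypothesis (Proposition \ref{p2.1.12}) can be triggered for an \emph{arbitrary} element simply by pairing it with a unit, since any pair one of whose members is a unit automatically generates the whole ring.

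Concretely, I would fix an arbitrary $a \in R$ and consider the pair $(a, -1)$. Since $-1 \in \UR$, we have $(-1)R = R$ and hence $aR + (-1)R = R$. Applying Proposition \ref{p2.1.12} to this pair produces an idempotent $e \in \BR$ and a regular element $r \in \RR$ with $a + (-1)e = r$, that is, $a - e = r$. Rearranging yields $a = r + e$, which exhibits $a$ as the sum of the regular element $r$ and the idempotent $e$. As $a$ was arbitrary, every element of $R$ is a sum of a regular element and an idempotent, so $R$ is almost clean.

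There is essentially no serious obstacle to overcome; the only point requiring care is the choice of sign. If one instead paired $a$ with $+1$, the definition would yield $a = r - e$, and $-e$ need not be idempotent, so the resulting decomposition would fail to have the required form. Choosing $b = -1$ rather than $b = 1$ is precisely what makes the idempotent appear with the correct sign after rearranging, and this is the one genuine idea in the argument.
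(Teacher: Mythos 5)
Your proof is correct and follows essentially the same route as the paper's: the paper also pairs an arbitrary (nonzero) element $a$ with $-1$, uses $aR+(-1)R=R$, and applies the idempotent regular range 1 condition to get $a-e=r$, hence $a=r+e$. Your remark about why $b=-1$ rather than $b=+1$ is a nice touch the paper leaves implicit, and you correctly handle $a=0$, which the paper needlessly excludes.
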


\begin{proof}
      Let $R$ be a ring of idempotent regular range 1 and let $a\in R$ be any nonzero element $a\in R$. Then $aR+(-1)R=R$ and $a-e=r$, where $e$ is an idempotent and $r$ is a regular element of $R$.
\end{proof}

{\bf Open question:}
Is every  commutative almost clean ring a ring of idempotent regular range 1?

\begin{proposition}\label{p2.1.16}
      For a commutative ring $R$ the following conditions are equivalent:

            1) \; $R$ is an  indecomposable almost clean ring;

            2) \; $R$ is a regular local ring.
\end{proposition}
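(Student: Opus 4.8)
The plan is to prove both implications directly from the definitions, keeping in mind that here ``regular'' means nonzero divisor (an element of $\RR$) and that ``indecomposable'' means $\BR = \{0,1\}$; the two notions dovetail almost immediately.

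For the implication $(2) \Rightarrow (1)$, I would first establish indecomposability. Let $e \in \BR$. Since $R$ is a regular local ring, either $e$ or $1-e$ lies in $\RR$. Because $e(1-e) = 0$, if $e \in \RR$ then $1 - e = 0$, whereas if $1-e \in \RR$ then $e = 0$; in either case $e \in \{0,1\}$, so $\BR = \{0,1\}$ and $R$ is indecomposable. To show $R$ is almost clean, I would take an arbitrary $a \in R$ and split on the defining dichotomy: if $a \in \RR$, then $a = a + 0$ already exhibits $a$ as a regular element plus an idempotent; if instead $1 - a \in \RR$, then $a - 1 = -(1-a) \in \RR$ as well, and $a = (a-1) + 1$ writes $a$ as the regular element $a-1$ plus the idempotent $1$. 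Thus every element is almost clean.

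For the converse $(1) \Rightarrow (2)$, I would take an arbitrary $a \in R$ and use the almost clean hypothesis to write $a = r + e$ with $r \in \RR$ and $e \in \BR$. Since $R$ is indecomposable, $\BR = \{0,1\}$, so either $e = 0$, which gives $a = r \in \RR$, or $e = 1$, which gives $1 - a = -r \in \RR$. This is precisely the statement that for every $a \in R$ either $a$ or $1-a$ is regular, so $R$ is a regular local ring in the sense of Definition \ref{d2.1.10}.

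Neither direction presents a genuine obstacle. The only point requiring care is the bookkeeping around the meaning of ``regular'' and the observation that the almost clean decomposition $a = r + e$ is forced to have $e \in \{0,1\}$ exactly by indecomposability, which is what makes the idempotent summand align with the two-way regular local dichotomy. The single small computation worth stating explicitly is $e(1-e) = 0$, which drives the indecomposability half of the argument.
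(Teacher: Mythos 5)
Your proof is correct, and for the direction $(1)\Rightarrow(2)$ it coincides with the paper's (the paper states it in one terse sentence; you supply the explicit case split $e=0$ versus $e=1$ in the almost clean decomposition $a=r+e$). For $(2)\Rightarrow(1)$ your route differs in a worthwhile way: the paper proves indecomposability directly (the same $e(1-e)=0$ observation you make, stated somewhat elliptically) but obtains almost-cleanness by citation, chaining Proposition \ref{p2.1.13} (a regular local ring has idempotent regular range 1) with Proposition \ref{p2.1.15} (idempotent regular range 1 implies almost clean), whereas you exhibit the decompositions outright: $a=a+0$ when $a\in\RR$, and $a=(a-1)+1$ with $a-1=-(1-a)\in\RR$ when $1-a\in\RR$. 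Your direct argument is essentially what one gets by unwinding the paper's chain at the pair $(a,-1)$, but it is self-contained and elementary; in particular it does not depend on the proof of Proposition \ref{p2.1.13}, whose justification in the paper (passing from $aR+(a+b)R=R$ to regularity of $a+b$) is not an immediate consequence of Definition \ref{d2.1.10} and deserves scrutiny. What the paper's route buys instead is placement of the result inside its hierarchy of range conditions, reusing lemmas already established; what yours buys is independence from that machinery and a proof readable from the two definitions alone.
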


\begin{proof}
      Let $R$ be an indecomposable almost clean ring. Since 0 and 1 are all idempotents of $R$, we have for any a that either $a$ or $1-a$ is a regular element of $R$.

      Let $R$ be a regular local ring. Since for each idempotent $e\in R$ we have, that both $e$ and $1-e$ are  idempotents we have that $R$ is indecomposable ring. By Proposition \ref{p2.1.13}, we have that $R$ is a ring of idempotent regular range 1 and by Proposition \ref{p2.1.15}, $R$ is an almost clean ring.
\end{proof}

By Theorem \ref{t1.1.1} and Proposition \ref{p2.1.11} we have the following result.

\begin{theorem}\label{t2.1.17}
      A commutative indecomposable almost clean Bezout ring is a Hermite ring.
\end{theorem}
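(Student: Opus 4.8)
The plan is to chain together the structural equivalence of Proposition~\ref{p2.1.16} with the stable-range machinery of Proposition~\ref{p2.1.11} and Theorem~\ref{t1.1.1}, so that essentially no new computation is required; the whole burden has already been discharged in those earlier results.

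First I would observe that the hypotheses split naturally. The ring $R$ is assumed to be indecomposable and almost clean, which by Proposition~\ref{p2.1.16} is precisely the statement that $R$ is a regular local ring in the sense of Definition~\ref{d2.1.10}: for every $a\in R$ either $a$ or $1-a$ lies in $\RR$. The remaining hypothesis, that $R$ is Bezout, is carried along unchanged. Thus the two conditions combine to say that $R$ is a commutative regular local Bezout ring.

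Next I would apply Proposition~\ref{p2.1.11} verbatim: a commutative regular local Bezout ring is a ring of stable range $2$. This is the step where the real content sits --- it is there that the regular local property is used to rule out the degenerate case $a_0u+b_0v-1\in\RR$ (which would force $d=0$), so that the surviving case produces a regular $a_0u+b_0v$ and, via the Gillman--Henriksen reduction, a relation $a_1R+b_1R=R$ yielding stable range $2$. Since the present theorem merely feeds its hypotheses into that proposition, I would not reprove it.

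Finally I would close the loop with Theorem~\ref{t1.1.1}: for a commutative Bezout ring, being Hermite is equivalent to having stable range $2$. As $R$ is Bezout and has stable range $2$, this gives at once that $R$ is an Hermite ring. The only point requiring care is bookkeeping: ensuring the Bezout hypothesis is still available at the last step (it is, since it was never consumed) and that ``regular element'' is read throughout as the nonzero-divisor notion $\RR$ rather than the von Neumann one, so that the almost-clean hypothesis and Definition~\ref{d2.1.10} refer to the same class of elements. There is no genuine obstacle here; the difficulty, such as it is, has been fully absorbed into the preceding propositions.
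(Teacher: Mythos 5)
Your proof is correct and follows exactly the route the paper takes: the paper derives Theorem~\ref{t2.1.17} by combining Proposition~\ref{p2.1.16} (indecomposable almost clean $=$ regular local), Proposition~\ref{p2.1.11} (regular local Bezout $\Rightarrow$ stable range $2$), and Theorem~\ref{t1.1.1} (Bezout $+$ stable range $2$ $\Rightarrow$ Hermite). You have merely made explicit the appeal to Proposition~\ref{p2.1.16}, which the paper leaves implicit in its one-line citation.
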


\begin{proposition}\label{p2.1.18}
      A commutative semihereditary local ring is a ring of idempotent regular range 1.
\end{proposition}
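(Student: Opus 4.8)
The plan is to reuse the case analysis from Proposition~\ref{p2.1.6} and then upgrade its conclusion from a semihereditary to an idempotent-times-regular representation by running the regularity computation inside the proof of Theorem~\ref{t2.1.14}. Let $R$ be a commutative semihereditary local ring and let $aR+bR=R$. I would first record the routine identity $aR+(a+b)R=R$, which holds because $b=(a+b)-a$. Applying Definition~\ref{d2.1.5} to the coprime pair $a,\,a+b$ then forces at least one of $a$ or $a+b$ to be a semihereditary element, and this gives the two cases to treat.

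In the case that $a$ is semihereditary, Proposition~\ref{p1.2.3} supplies a factorization $a=er$ with $e\in\BR$ and $r\in\RR$. I would then set $s=a+b(1-e)$ and show $s\in\RR$ exactly as in Theorem~\ref{t2.1.14}: from $sx=0$ one gets $ax=-b(1-e)x$, multiplying by $e$ kills the right-hand side and forces $ex=0$, whence $ax=0$ and $bx=0$, and finally $x=0$ because $aR+bR=R$. Thus $a+b(1-e)=s$ is the required representation with idempotent $1-e$ (the degenerate value $e=1$ simply returns $a+b\cdot 0=a\in\RR$).

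In the case that $a+b$ is semihereditary, Proposition~\ref{p1.2.3} gives $a+b=gr'$ with $g\in\BR$ and $r'\in\RR$. The key manoeuvre is to apply the same computation to the coprime pair $a+b,\,-b$ (legitimate since $(a+b)R+bR=aR+bR=R$): it yields that $(a+b)+(-b)(1-g)$ is a regular element $s'$, and this element collapses to $a+bg$. Hence $a+bg=s'\in\RR$ is the required representation with idempotent $g$ (the degenerate value $g=1$ returns $a+b\cdot1=a+b\in\RR$). Together the two cases show that $R$ is a ring of idempotent regular range~1 in the sense of Proposition~\ref{p2.1.12}.

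The only point demanding care is the second case: one must notice that adjoining $-b(1-g)$ to $a+b$ telescopes to $a+bg$, so that the idempotent coefficient ends up multiplying $b$ rather than $a+b$, which is precisely the shape $a+be=r$ demanded of idempotent regular range~1. Checking that the regularity argument of Theorem~\ref{t2.1.14} transfers verbatim with $-b$ in place of $b$ is otherwise mechanical, since that argument uses only the factorization $gr'$ and the coprimality relation.
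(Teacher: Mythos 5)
Your proposal is correct and follows essentially the same route as the paper: the same case split via $aR+(a+b)R=R$, the factorization from Proposition~\ref{p1.2.3}, and the regularity computation of Theorem~\ref{t2.1.14} applied to the pair $a+b,\,-b$ with the telescoping $(a+b)-b(1-g)=a+bg$. You merely spell out the verification that the paper leaves implicit in its second case, which is a welcome clarification but not a different argument.
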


\begin{proof}
      Let $R$ be a commutative semihereditary local ring and $aR+bR=R$. If $a$ is semihereditary element we have a representation $a=er$, where $e$ is an idempotent and $r$ is a regular element. Then we have that $a+b(1-e)$ is a regular element by the proof of Theorem \ref{t2.1.14}. If $a$ is not a semihereditary element, then by the equality $aR+(a+b)R=R$, we have that $a+b=er$ is a semihereditary element, i.e. $e^{2}=e$ and $r\in \RR$.

      Since $(a+b)R+(-b)R=R$, the equalities $a+b - b(1-e)=a+be=s$ we provide a necessary representation.
\end{proof}

\begin{theorem}\label{t1.2.19}
      Let $R$ be a commutative Bezout ring. Then $\QR$ is a (von Neumann) regular local ring if and only if $R$ is a semihereditary local ring.
\end{theorem}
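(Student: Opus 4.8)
The plan is to build a bridge between semihereditary elements of $R$ and (von Neumann) regular elements of $\QR$, and then transport both ``local'' conditions across it. Concretely, I would first establish the following key lemma: for a commutative Bezout ring $R$ and any $s \in \RR$, an element $c \in R$ is semihereditary if and only if $\frac{c}{s}$ is a (von Neumann) regular element of $\QR$. The forward direction is easy: by Proposition \ref{p1.2.3} write $c = er$ with $e \in \BR$, $r \in \RR$; then $\frac{c}{s} = e\cdot\frac{r}{s}$ with $\frac{r}{s} \in \UQR$, so $\frac{c}{s}$ is an idempotent times a unit and hence (von Neumann) regular by Proposition \ref{p1.2.2}. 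For the converse, suppose $\frac{c}{s}$ is (von Neumann) regular; by Proposition \ref{p1.2.2} write $\frac{c}{s} = \phi w$ with $\phi \in \BQR$, $w \in \UQR$, and by Proposition \ref{p1.2.1} we have $\phi = e \in \BR$. Since $\frac{c}{s}(1-e) = ew(1-e) = 0$, we get $c(1-e) = 0$ in $R$ and $c = ce$. Then I would check that $r := c + (1-e)$ is regular: if $rx = 0$ then separately $cx = 0$ and $(1-e)x = 0$, and $cx = 0$ forces $ex = 0$ because $\frac{c}{s} = ew$ with $w$ invertible. Thus $c = er$ is semihereditary. As $\frac{1}{s} \in \UQR$, the lemma is independent of $s$; with $s = 1$ it reads: $c$ is semihereditary in $R$ iff $\frac{c}{1}$ is (von Neumann) regular in $\QR$.

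For the implication from $R$ semihereditary local to $\QR$ (von Neumann) regular local, take an arbitrary $\frac{c}{s} \in \QR$; I must show that $\frac{c}{s}$ or $1 - \frac{c}{s} = \frac{s-c}{s}$ is (von Neumann) regular. Since $R$ is Bezout, write $cR + (s-c)R = dR$, so $c = c_0 d$, $s - c = e_0 d$ and $d \mid s$; as $s \in \RR$, also $d \in \RR$, and the cancellation used already in Proposition \ref{p1.2.1} gives $c_0 R + e_0 R = R$. Definition \ref{d2.1.5} applied to this coprime pair yields that $c_0$ or $e_0$ is semihereditary, and multiplying by the regular element $d$ preserves this (if $c_0 = fp$ then $c = f(pd)$ with $pd \in \RR$), so $c$ or $s - c$ is semihereditary. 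The key lemma then makes $\frac{c}{s}$ or $\frac{s-c}{s}$ (von Neumann) regular, as required.

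For the reverse implication, take $a, b \in R$ with $aR + bR = R$; by the key lemma it suffices to prove that $\frac{a}{1}$ or $\frac{b}{1}$ is (von Neumann) regular in $\QR$. Writing $\xi = \frac{a}{1}$, $\eta = \frac{b}{1}$, these are coprime in $\QR$, say $\xi\mu + \eta\nu = 1$, and I must upgrade the one-variable hypothesis ``each element or its complement is (von Neumann) regular'' to the coprime-pair form ``of two coprime elements one is (von Neumann) regular''. This upgrade is the main obstacle, since (von Neumann) regularity is not preserved under multiplication, so one cannot pass naively from $\eta\nu$ back to $\eta$. I would proceed as follows. Apply the hypothesis to $\zeta := \eta\nu$: either $\eta\nu$ or $\xi\mu = 1 - \eta\nu$ is (von Neumann) regular; by symmetry assume $\eta\nu$ is, and by Proposition \ref{p1.2.2} write $\eta\nu = gw$ with $g \in \BQR$, $w \in \UQR$. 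Multiplying $\xi\mu + \eta\nu = 1$ by $1 - g$ gives $1 - g \in \xi\QR$, while $g \in \eta\QR$. Splitting $\QR = g\QR \times (1-g)\QR$, one checks that $\xi$ is invertible on the $(1-g)$-component and $\eta$ on the $g$-component; hence $\xi$ is (von Neumann) regular iff $g\xi$ is (von Neumann) regular in $g\QR$, and $\eta$ is (von Neumann) regular iff $(1-g)\eta$ is (von Neumann) regular in $(1-g)\QR$. If both failed, the element $\psi$ with $g$-component $g\xi$ and $(1-g)$-component $(1-g)(1-\eta)$ would satisfy that neither $\psi$ nor $1 - \psi$ (whose $(1-g)$-component is $(1-g)\eta$) is (von Neumann) regular, contradicting the hypothesis. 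Thus $\xi$ or $\eta$ is (von Neumann) regular, and the key lemma returns the semihereditary $a$ or $b$. The step I expect to demand the most care is precisely this reduction from the ``$a$ or $1-a$'' form to the coprime-pair form inside $\QR$, effected through the idempotent $g$ extracted from $\eta\nu$.
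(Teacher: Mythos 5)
Your proposal is correct, and its skeleton coincides with the paper's: both directions cross the same bridge built from Propositions~\ref{p1.2.1}--\ref{p1.2.3} (semihereditary in $R$ means idempotent times regular, von Neumann regular in $\QR$ means idempotent times unit, and idempotents of $\QR$ descend to $R$), and your reduction of a fraction to a comaximal pair in $R$ is the paper's own Bezout cancellation ($cR+(s-c)R=dR$, $d$ regular as a divisor of the regular element $s$, cancel $d$). Where you genuinely go beyond the paper is the direction ``$\QR$ regular local $\Rightarrow$ $R$ semihereditary local'': the paper's proof opens by flatly asserting that since $\QR$ is (von Neumann) regular local and $\frac{a}{1}\QR+\frac{b}{1}\QR=\QR$, one of $\frac{a}{1},\frac{b}{1}$ is regular --- i.e.\ it silently upgrades Contessa's elementwise ``$x$ or $1-x$'' definition to the comaximal-pair form without argument (presumably as folklore from \cite{Cont}). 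Your Peirce-splitting argument --- extract the idempotent $g$ from whichever of $\eta\nu$, $1-\eta\nu$ is regular, note that $\xi$ is invertible on the $(1-g)$-corner and $\eta$ on the $g$-corner, and kill the double-failure case with the mixed element $\psi=g\xi+(1-g)(1-\eta)$, neither of $\psi$, $1-\psi$ then being regular --- is a correct proof of exactly this upgrade and makes the theorem self-contained; I checked it and it works, since regularity in a direct product is componentwise and units are regular. Similarly, your construction $r=c+(1-e)$ in the key lemma justifies the paper's bare claim ``then $a=er$ with $r\in\RR$'' (the paper never explains where such an $r$ in $R$ comes from, and the unit $u\in\UQR$ need not lie in $R$). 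So: same route, but you supply two verifications the paper omits. The only compressed spot on your side is the line ``if $rx=0$ then separately $cx=0$ and $(1-e)x=0$,'' which deserves its one-line computation: multiply $rx=0$ by $e$ and use $c=ce$ to get $cx=0$, whence $(1-e)x=0$ --- routine, but worth writing out.
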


\begin{proof}
      Let $aR+bR=R$, then $\frac{a}{1}\QR+\frac{b}{1}\QR=\QR$. Since $\QR$ is (von Neumann) regular local ring, either  $\frac{a}{1}$ or $\frac{b}{1}$ is a (von Neumann) regular element. If $\frac{a}{1}$ is a (von Neumann) regular element, then by Proposition \ref{p1.2.2} we have $\frac{a}{1}=eu$, where $e^{2}=e\in \QR$ and $u\in U(\QR)$. By Proposition \ref{p1.2.1}, we have $e\in R$. Then we have $a=er$, where $r$ is a regular element of $R$. The case $\frac{b}{1}$ is a (von Neumann) is similar.

      Let $R$ be a semihereditary local ring and
      $$\frac{a}{s}\QR+\frac{b}{s}\QR=\QR$$
      and either $\frac{a}{s}\neq 0$ or $\frac{b}{s}\neq 0$. Then $au+bv=t$ for some elements $u,v\in R$ and $t$ is a regular element $R$. Since $R$ is a commutative Bezout ring, then $aR+bR=dR$. Let $a=a_{0}d, \; b=b_{0}d$ and $ax+by=d$ for some elements $a_{0},b_{0},x,y\in R$. By the equality $au+bv=t$, we have $d(a_{0}u+b_{0}v)=t$. Then $d$ is a regular element as $a$ divisor of $t$. By the equality $ax+by=d$, we have $d(a_{0}x+b_{0}y-1)=0$. Since $d\neq 0$ and $d$ is a regular element, we have $a_{0}x+b_{0}y=1$. Hence $a_{0}R+b_{0}R=R$ we have $a_{0}$ or $b_{0}$ is a semihereditary element.

      If $a_{0}$ is a semihereditary element, by Proposition \ref{p1.2.3}, we have $a_{0}=er$, where $e^{2}=e$ and $r$ is a regular element of $R$. Since $a=a_{0}d=e(rd)$, we have $\frac{a}{s}=e\frac{rd}{s}$. Since $e^{2}=e$ and $\frac{rd}{s}\in U(\QR)$, we have that $\frac{a}{s}$ is a (von Neumann) regular element. If $b_{0}$ is  (von Neumann) regular, we have a similar proof. Then $\QR)$ is (von Neumann) regular local ring.
\end{proof}

\begin{definition}\label{d2.1.20}\cite{GilHuc}
      A commutative ring $R$ is said to be additely regular if for each $a\in R$ and each regular element $b\in R$ there exists an element $u \in R$ such that $a+ub$ is regular in $R$.
\end{definition}

\begin{proposition}\label{p1.2.21}
      A commutative Bezout ring of regular range 1 is additively regular.
\end{proposition}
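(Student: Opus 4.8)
The plan is to reduce to a coprime pair via the Bezout property and then invoke the regular range 1 condition directly, following the same pattern as the proofs of Proposition~\ref{p2.1.9} and Theorem~\ref{t1.2.19}. Fix $a \in R$ and a regular element $b \in \RR$; the goal is to produce $u \in R$ with $a + ub \in \RR$. Since $R$ is Bezout, I would first write $aR + bR = dR$ for some $d \in R$, so that $a = a_0 d$ and $b = b_0 d$ with $ax + by = d$ for suitable $a_0, b_0, x, y \in R$. The first small observation is that $d$ is itself regular: because $b = b_0 d \in \RR$, any $z$ with $dz = 0$ forces $bz = 0$ and hence $z = 0$, so $d \in \RR$ (in particular $d \neq 0$).

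Next I would extract coprimality of the reduced pair. From $ax + by = d$ together with $a = a_0 d$ and $b = b_0 d$ one gets $d(a_0 x + b_0 y - 1) = 0$; since $d$ is a nonzero divisor this yields $a_0 x + b_0 y = 1$, that is $a_0 R + b_0 R = R$. Now the regular range 1 hypothesis applies to the coprime pair $(a_0, b_0)$, giving an element $k \in R$ such that $r = a_0 + b_0 k$ is a regular element of $R$, i.e. $r \in \RR$.

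Finally I would lift the conclusion back to the original elements. Multiplying $r = a_0 + b_0 k$ by $d$ gives $dr = a_0 d + b_0 d\,k = a + bk$. Both $d$ and $r$ lie in $\RR$, and in a commutative ring a product of nonzero divisors is again a nonzero divisor, so $a + bk = dr \in \RR$. Taking $u = k$ (commutativity gives $bk = ub$) exhibits $a + ub$ as regular, which is precisely the definition of additive regularity.

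I do not expect a serious obstacle here; the proof is essentially a reassembly of the Bezout manipulation already used elsewhere in the paper. The only points requiring care are the two elementary closure facts about nonzero divisors, namely that a divisor of a regular element is regular and that a product of regular elements is regular, and the logical subtlety of applying the regular range 1 condition to the coprime pair $(a_0, b_0)$ rather than to $(a, b)$, since $a$ and $b$ need not be coprime and the condition is only guaranteed for unimodular rows.
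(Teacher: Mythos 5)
Your proof is correct and follows essentially the same route as the paper's own: reduce via the Bezout property to the coprime pair $(a_0, b_0)$, note that $d$ is regular as a divisor of $b$, apply the regular range 1 condition, and lift back by multiplying by $d$. Your version is in fact slightly more careful than the paper's, since you explicitly verify the two closure facts about nonzero divisors that the paper uses tacitly.
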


\begin{proof}
      Let $R$ be a commutative Bezout ring of regular range 1 and let $a$ be any element $R$ and let $b$ be any regular element of $R$. Since $R$ is a commutative Bezout ring, we have $aR + bR = dR$ and where $au + bv = d$, $a=a_{0}d$, $b=b_{0}d$ for some element $u,v,a_{0},b_{0}\in R$. Since $b$ is a regular element of $R$, we have that $d$ is a regular element of $R$, since $d$ is divisor of $b$.

      Since $au+bv=d$, we have $d(a_{0}u+b_{0}v-1)=0$. Hence $d\neq 0$ and we have $a_{0}u+b_{0}v-1=0$ i.e. $a_{0}R+b_{0}R=R$. Thus, $R$ is  a ring of regular range 1 and we obtain flat  $a_{0}+b_{0}t=r$ is a regular element for some $t\in R$. Then $a+bt=rd$ is a regular ring, i.e. $R$ is an additively regular ring.
\end{proof}

\end{document}